\title{The Prime Number Formula of Gandhi}
\author{Berndt}{Gensel}{B. Gensel}{Spittal an der Drau}
\theoremstyle{plain}
 \newtheorem{theorem}{Theorem}[]
\theoremstyle{definition}
\newcommand{\M}[1]{\mathbb{#1}}
\newcommand{\opn}[1]{\operatorname{#1}}
\newcommand{\pz}[1]{#1\sharp}
\newcommand{\indbox}[2]{
\scriptsize
\begin{array}{c}
#1 \\ #2
\end{array}
}
\newcommand{\ggT}{\opn{\scriptsize GCD}}
\newcommand{\CGD}{\opn{GCD}}
\begin{document}

\begin{center} 
\small Release: \today 
\end{center}
\normalsize

\begin{abstract}
With the formula of Gandhi you can determine the on $p_n$ immedately subsequent prime $p_{n+1}$ from the knowledge of the primes $p_1, p_2, \dots , p_n$. An elementary proof of its trueness will be detailed shown in this paper. Finally the question for the order of the primes on the number line will be discussed.
\end{abstract}

\paragraph*{Keywords: \ }
primes, number theory - MSC2010: 11A41

\section{Introduction}
1971 has J.W. \textbf{Gandhi} shown in \cite{1}  a formula to calculate theoretically the on $p_n$ immedately subsequent prime $p_{n+1}$ from the divisors of the \textit{primorial}  
\footnote{
$\pz{p_n} := \prod_{i=1}^n p_i$
}
$\pz{p_n}$.
\begin{equation}\label{Ga-1.1}
p_{n+1} = \left[1-\log_2\left(-\frac{1}{2}+\sum_{d\mid p_n\sharp}
\frac{\mu(d)}{2^d-1}\right)\right].
\footnote{We use the notation of Ribenboim (see \cite{3}, S. 140)}
\end{equation}
Practically the calculability is limited insofar as the term $2^d$ in the divisor of the summands reach very soon such values which cannot be numerically calculated.

An elementary proof for this formula came 1972 in \cite{4} from C. \textbf{Vanden Eynden}. S.W. \textbf{Golomb} has used the binary code of the number $1$ for his proof \cite{2} in 1974.

In (\ref{Ga-1.1}) $\mu(d)$ is the \textit{M\"obius function}
\[
\mu(d) = \left\lbrace 
\begin{array}{ll}
1 &\mbox{if } d=1 \\
(-1)^r &\mbox{if } d \mbox{ is a product of }r\mbox{ different prime factors}\\
0 &\mbox{if }d\mbox{ is not free of squares.}
\end{array}
\right.
\]

Let be
\begin{equation}\label{Ga-1.2}
\theta(n) := -\frac{1}{2}+\sum_{d \mid \pz{p_n}}\frac{\mu(d)}{2^d-1}
\end{equation}
Considering of $\log_2 2 = 1$  the formula (\ref{Ga-1.1}) get the form
\begin{equation}\label{Ga-1.3}
p_{n+1} = \left[\log_2 2 - \log_2 \theta(n) \right]
= \left[\log_2 \frac{2}{\theta(n)}\right].
\end{equation}

\section{Proof}
For the proof of the trueness of (\ref{Ga-1.3}) we need still three elementary theorems.

\begin{theorem}\label{S-5.1}
For every $a \in \M{N}$ holds:
\[
\sum_{k=1}^\infty 2^{-ka} = \frac{1}{2^a-1}.
\]
\end{theorem}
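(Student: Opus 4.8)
The plan is to recognize the left-hand side as an ordinary geometric series and evaluate it by its closed form. Writing the summand as $2^{-ka} = \left(2^{-a}\right)^k$, I would set $r := 2^{-a}$ so that the sum becomes $\sum_{k=1}^\infty r^k$, a geometric series starting from index $k=1$. The essential preliminary is the convergence check: since $a \in \M{N}$ means $a \geq 1$, we have $0 < 2^{-a} \leq \tfrac{1}{2} < 1$, so $|r| < 1$ and the series converges.

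Given convergence, I would apply the standard closed form $\sum_{k=1}^\infty r^k = \dfrac{r}{1-r}$, yielding
\[
\sum_{k=1}^\infty 2^{-ka} = \frac{2^{-a}}{1 - 2^{-a}}.
\]
The final step is a one-line algebraic simplification: multiplying numerator and denominator by $2^a$ gives $\dfrac{1}{2^a - 1}$, which is exactly the claimed value.

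If one prefers not to invoke the closed form as a black box, the alternative route is to compute the partial sum $S_N = \sum_{k=1}^N r^k$ explicitly. Using the telescoping identity $(1-r)S_N = r - r^{N+1}$, one obtains $S_N = \dfrac{r - r^{N+1}}{1-r}$, and then lets $N \to \infty$; because $0 < r < 1$ forces $r^{N+1} \to 0$, the limit is $\dfrac{r}{1-r}$, after which the same simplification finishes the argument.

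Honestly there is no substantial obstacle here: the statement is an immediate consequence of the geometric series formula. The only point that warrants any care is confirming $2^{-a} < 1$ to legitimize taking the limit, and this is guaranteed by the hypothesis $a \in \M{N}$. The result will serve later as the tool for rewriting each term $\dfrac{1}{2^d - 1}$ appearing in $\theta(n)$ as a series in powers of $2^{-d}$.
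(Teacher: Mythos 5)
Your proof is correct and takes essentially the same approach as the paper: both evaluate the sum as a geometric series with ratio $2^{-a}$ and simplify $\frac{2^{-a}}{1-2^{-a}}$ to $\frac{1}{2^a-1}$; the paper merely reaches the closed form by summing from $k=0$ and subtracting $1$, while you invoke the index-$1$ formula directly (and your explicit convergence check $0 < 2^{-a} < 1$ is a small point of care the paper leaves implicit).
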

\begin{proof}
\begin{eqnarray*}
\sum_{k=1}^\infty 2^{-ka} &=&
\sum_{k=0}^\infty 2^{-ka} - 1
\mbox{ and as geometrical series} \\
 &=&\frac{1}{1-2^{-a}}-1 \\
 &=&\frac{2^{-a}}{1-2^{-a}}
 = \frac{1}{2^a-1}.
\end{eqnarray*}
\end{proof}

The proof idea for the following theorem comes from the proof of the formula of Gandhi from Vanden Eynden (\cite{4}) according to the book of RIBENBOIM (\cite{3}, S. 141/142).
\begin{theorem}\label{S-5.3}
\[
\sum_{d|p\sharp} \frac{\mu(d)}{2^d-1}=
\sum_{\ggT(t,p\sharp)=1} 2^{-t}.
\]
\end{theorem}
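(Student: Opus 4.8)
The plan is to start from Theorem \ref{S-5.1}, which lets me replace each reciprocal $\frac{1}{2^d-1}$ appearing on the left-hand side by the geometric series $\sum_{k=1}^\infty 2^{-kd}$. Substituting this into the left-hand sum turns it into a double sum over the divisors $d\mid p\sharp$ and the indices $k\ge 1$ of the terms $\mu(d)\,2^{-kd}$. Since the majorant $\sum_{d\mid p\sharp}\sum_{k\ge 1} 2^{-kd}$ is a finite sum of convergent geometric series, the double series converges absolutely, and I may rearrange its terms freely.

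The key step is then to reindex the double sum by the single variable $n=kd$ and to collect the coefficient of $2^{-n}$. For a fixed $n$, a pair $(d,k)$ contributes to $2^{-n}$ exactly when $d\mid p\sharp$ and $d\mid n$ simultaneously, that is, when $d\mid\ggT(n,p\sharp)$, and in that case $k=n/d$ is uniquely determined. Hence the coefficient of $2^{-n}$ in the rearranged series is $\sum_{d\mid\ggT(n,p\sharp)}\mu(d)$.

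At this point I would invoke the fundamental Möbius summation identity $\sum_{d\mid m}\mu(d)$, which equals $1$ when $m=1$ and $0$ when $m>1$. Applying it with $m=\ggT(n,p\sharp)$ shows that the coefficient of $2^{-n}$ equals $1$ precisely when $\ggT(n,p\sharp)=1$ and vanishes otherwise. Summing over all $n\ge 1$ therefore retains exactly the terms $2^{-n}$ with $n$ coprime to $p\sharp$, and after renaming $n$ to $t$ this is the right-hand side $\sum_{\ggT(t,p\sharp)=1}2^{-t}$, completing the argument.

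I expect the genuine content to lie in the reindexing and coefficient-collecting step, together with the clean appeal to the Möbius identity; the interchange of the two summations is routine once absolute convergence is noted. It is worth remarking that the proof never uses that $p\sharp$ is squarefree — the identity holds with any fixed positive integer in place of $p\sharp$ — but the primorial is the case relevant to Gandhi's formula.
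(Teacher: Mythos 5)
Your proposal is correct and follows essentially the same route as the paper: expand $\frac{1}{2^d-1}$ via Theorem \ref{S-5.1}, interchange the sums, identify the coefficient of $2^{-t}$ as $\sum_{d\mid\ggT(t,p\sharp)}\mu(d)$, and apply the M\"obius identity (\ref{G-5.1}). Your explicit justification of the rearrangement by absolute convergence, and your remark that the argument works for any fixed positive integer in place of $p\sharp$, are small refinements the paper omits, but the substance of the argument is identical.
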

\begin{proof}
With theorem \ref{S-5.1} is
\[
\sum_{d|p\sharp} \frac{\mu(d)}{2^d-1}=
\sum_{d|p\sharp} \mu(d) \cdot \sum_{k=1}^{\infty} 2^{-kd}=
\sum_{k=1}^{\infty} \sum_{d|p\sharp} \mu(d) \cdot 2^{-kd}.
\]
In the dexter sum occur terms $\mu(d) \cdot 2^{-t}$ for $t \geq 1$ if and only if $d$ is a divisor of $\CGD(t,p\sharp)$. 
Therefore the coefficients of $2^{-t}$ are
\[
\sum_{d|\ggT(t,p\sharp)}\mu(d)
\]
and we can this sum also note as
\[
\sum_{t=1}^{\infty} 2^{-t} \sum_{d|\ggT(t,p\sharp)}
\mu(d).
\]
For the M\"obius function $\mu(d)$ and an integer $m \in \M{N}$ however holds (see \cite{3}, S. 141)
\begin{equation}\label{G-5.1}
\sum_{d|m} \mu(d) = \left\lbrace
\begin{array}{l}
1 \mbox{ if } m=1 \\
0 \mbox{ if } m>1.
\end{array}
\right.
\end{equation}
Hence the second sum becomes to zero for all $t$ with $\CGD(t,p\sharp)>1$. Only the summands with $\CGD(t,p\sharp)=1$ remain, therefore we get
\[
\sum_{d|p\sharp} \frac{\mu(d)}{2^d-1}=
\sum_{
\ggT(t,p\sharp)=1} 2^{-t}.
\] 
\end{proof}

\begin{theorem}\label{S-5.4}
\[
\sum_{t>n} 2^{-t} = 2^{-n}.
\]
\end{theorem}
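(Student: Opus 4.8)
The plan is to recognize the left-hand side as the tail of a geometric series and reduce it, by a simple index shift, to the already-proved Theorem~\ref{S-5.1}. First I would make the range of summation explicit: the condition $t > n$ means $t$ runs over the integers $n+1, n+2, n+3, \dots$, so the sum is $\sum_{t=n+1}^{\infty} 2^{-t}$. Substituting $s = t - n$ transforms this into $\sum_{s=1}^{\infty} 2^{-(s+n)} = 2^{-n} \sum_{s=1}^{\infty} 2^{-s}$, where the factor $2^{-n}$ may be pulled out of the sum because the series converges.

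Next I would evaluate the remaining series $\sum_{s=1}^{\infty} 2^{-s}$ by invoking Theorem~\ref{S-5.1} with $a = 1$, which gives $\sum_{k=1}^{\infty} 2^{-k} = \frac{1}{2^{1}-1} = 1$. Multiplying the factor $2^{-n}$ back in then yields $\sum_{t>n} 2^{-t} = 2^{-n} \cdot 1 = 2^{-n}$, which is the claim. Alternatively, one could bypass Theorem~\ref{S-5.1} altogether and sum the geometric series directly from its first term $2^{-(n+1)}$ and common ratio $2^{-1}$, obtaining $\frac{2^{-(n+1)}}{1 - 2^{-1}} = 2^{-n}$; either route is essentially immediate.

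There is no genuine obstacle here, since the statement is a one-line consequence of the geometric-series identity. The only point requiring any care is making the index shift $s = t - n$ explicit and checking that the reindexed sum begins at $s = 1$, so that Theorem~\ref{S-5.1} applies verbatim with $a = 1$. Because every term is positive and the series is dominated by a convergent geometric series, both the reindexing and the factoring-out of $2^{-n}$ are unconditionally valid, so no delicate convergence discussion is needed.
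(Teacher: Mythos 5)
Your proof is correct, and it takes a slightly different route from the paper's. The paper evaluates the tail by subtraction: it computes the full series $\sum_{k=0}^{\infty} 2^{-k} = 2$ and the partial sum $\sum_{k=0}^{n} 2^{-k} = 2 - 2^{-n}$ via the finite geometric-sum formula, then subtracts to obtain $2^{-n}$; notably, it does \emph{not} invoke Theorem~\ref{S-5.1} at all, treating the result as a self-contained geometric-series computation. You instead reindex with $s = t - n$ to pull out the factor $2^{-n}$ and reduce the tail to $\sum_{s=1}^{\infty} 2^{-s}$, which is exactly the $a = 1$ case of Theorem~\ref{S-5.1}. Your version is marginally more economical in that it reuses an already-proved result rather than redoing the geometric-series algebra, and your remark that the positivity of the terms makes the reindexing and factoring unconditionally valid is a nice touch of rigor (absolute convergence is what licenses such manipulations, though for a simple index shift of a convergent series even that is not strictly needed). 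The paper's version buys independence from Theorem~\ref{S-5.1}, which is immaterial here since that theorem precedes this one; your alternative one-line summation from first term $2^{-(n+1)}$ with ratio $2^{-1}$ is also perfectly valid. In short: both arguments are complete and elementary, differing only in whether the tail is computed by subtraction of a partial sum or by an index shift.
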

\begin{proof}
The series $\sum_{k=0}^\infty 2^{-k}$ has as geometrical series the sum value
\[
\sum_{k=0}^\infty 2^{-k} = \frac{1}{1-2^{-1}} = 2.
\]
The $n$-th partial sum has the value
\[
\sum_{k=0}^n 2^{-k} = \frac{1-2^{-(n+1)}}{1-2^{-1}} = 2-2^{-n}.
\]
And therefore is
\[
\sum_{t>n} 2^{-t} = \sum_{k=0}^\infty 2^{-k}
- \sum_{k=0}^n 2^{-k} = 2 - 2 + 2^{-n} = 2^{-n}.
\]
\end{proof}
Additionally it's true for every prime $p$:
\[
\sum_{t>n} 2^{-t} = \sum_{\indbox{t>n}{\ggT(t,\pz{p})=1}} 2^{-t} +
\sum_{\indbox{t>n}{\ggT(t,\pz{p})>1}} 2^{-t}
\]
and therefore
\begin{equation}\label{G-5.1.0}
\sum_{\indbox{t>n}{\ggT(t,\pz{p})=1}} 2^{-t} < \sum_{t>n} 2^{-t}
= 2^{-n}.
\end{equation}
If we execute the theorem \ref{S-5.3} to our function $\theta(n)$, we get
\begin{eqnarray*}
\theta(n) &=&
-\frac{1}{2}+\sum_{d \mid \pz{p_n}} \frac{\mu(d)}{2^d-1}\\
&=& -\frac{1}{2} + \sum_{\ggT(t,\pz{p_n})=1} 2^{-t}\\ 
&=& \sum_{\indbox{t \geq 2}{\ggT(t,\pz{p_n})=1}} 2^{-t}.
\end{eqnarray*}

Because for $2 \leq t \leq p_n$ always is $\CGD(t,\pz{p_n})>1$ and $p_{n+1}$ is the least natural number which is prime to $\pz{p_n}$, we get
\begin{eqnarray}
\theta(n)&=& \sum_{\indbox{t \geq p_{n+1}}{\ggT(t,\pz{p_n})=1}} 2^{-t} \label{G-5.1.1}\\
\theta(n)&=& 2^{-p_{n+1}} +\sum_{\indbox{t > p_{n+1}}{\ggT(t,\pz{p_n})=1}} 2^{-t} \nonumber\\
&=& 2^{-p_{n+1}} + r_n \label{G-5.2}
\end{eqnarray}
with 
\begin{equation}\label{G-5.3}
r_n = \sum_{\indbox{t > p_{n+1}}{\ggT(t,\pz{p_n})=1}} 2^{-t}.
\end{equation}
And because of (\ref{G-5.1.0}) and theorem \ref{S-5.4} is
\[
r_n < \sum_{t>p_{n+1}} 2^{-t} = 2^{-p_{n+1}}.
\]
Therefore it follows from (\ref{G-5.2})
\begin{eqnarray}
&&2^{-p_{n+1}} < \theta(n) = 2^{-p_{n+1}} + r_n
\mbox{ and}\nonumber\\
&&2^{-p_{n+1}} < \theta(n) < 2 \cdot 2^{-p_{n+1}}
\mbox{ resp.}\label{G-5.4}\\
&&2^{p_{n+1}+1} > \frac{2}{\theta(n)} > 2^{p_{n+1}}
\mbox{ and}\nonumber\\
&&p_{n+1} < \log_2\frac{2}{\theta(n)} < p_{n+1}+1
\mbox{ and finally}\label{G-5.4.0}\\
&&p_{n+1} = \left[\log_2\frac{2}{\theta(n)}\right].\nonumber
\end{eqnarray}\qed

The proof is based on the fact that the prime $p_{n+1}$ is the least natural number $>1$ which is prime to $\pz{p_n}$:
\begin{equation}\label{G-5.4.1}
p_{n+1} = \min\left(t \in \M{N}\setminus\lbrace 1\rbrace \mid \ggT(t,\pz{p_n})=1\right).
\end{equation}
The upper bound of (\ref{G-5.4}) can be lessened.
(\ref{G-5.3}) can be written as
\[
r_n = 2^{-p_{n+2}}+\sum_{\indbox{t > p_{n+2}}{\ggT(t,\pz{p_n})=1}} 2^{-t},
\]
since $p_{n+2}$ is because of $p_{n+2} < 2p_{n+1}$ the least natural number which is prime to $\pz{p_n}$ and is greater than $p_{n+1}$.
With (\ref{G-5.1.0}) we get
\[
\sum_{\indbox{t > p_{n+2}}{\ggT(t,\pz{p_n})=1}} 2^{-t} < 2^{-p_{n+2}} 
\]
and therefore
\[
r_n < 2 \cdot 2^{-p_{n+2}} \leq 2 \cdot 2^{-p_{n+1}-2} = 2^{-p_{n+1}-1},
\]
because additionally $p_{n+2} \geq p_{n+1}+2$ holds.
With this the residual $r_n$ becomes to
\[
r_n < \frac{2^{-p_{n+1}}}{2}.
\]
Therefore is
\[
\theta(n) = 2^{-p_{n+1}} + r_n < \frac{3}{2} \cdot 2^{-p_{n+1}}
\]
and
\[
2^{p_{n+1}} < \frac{3}{2\theta(n)}
\]
and because of (\ref{G-5.4.0})
\[
p_{n+1} < \log_2\frac{3}{2\theta(n)} < \log_2\frac{2}{\theta(n)} < p_{n+1}+1
\]
respectively
\begin{equation}\label{G-5.5}
p_{n+1} = \left[\log_2\frac{3}{2\theta(n)}\right].
\end{equation}

\section{The ''Order'' of the Primes}
Now it follows the question from the above, whether an order of the primes on the number line is confirmed by the \textit{formula of Gandhi}.
At first sight you would mean that a certain rule of the order of the primes on the number line would be visible, because the prime $p_{n+1}$ can be calculated by the primes $p_1,p_2, \ldots,p_n$.
But if we insert (\ref{G-5.1.1}) in (\ref{Ga-1.1}) then we get
\begin{equation}\label{Ga-6.1}
p_{n+1} = \left[1 - \log_2\left(\sum_{\indbox{t \geq p_{n+1}}{\CGD(t,\pz{p_n})=1}} 2^{-t}\right)\right].
\end{equation}
Here it becomes manifestly that the set of the summation indices
\[
Q = \left\lbrace t \in \M{N} \mid t \geq p_{n+1} \wedge \CGD(t, \pz{p_n})=1 \right\rbrace
\]
is a subset of the natural numbers which remain if all singles and multiples of the primes $p_1,p_2, \ldots,p_n$ from $\M{N}$ are sieved
\footnote{Sieve of Eratosthenes}.
And the least number in this subset is just the prime $p_{n+1}$
\footnote{see (\ref{G-5.4.1})}.
It is not ''calculated'' but the set of the summation indices is limited on the singles and multiples of the on $p_n$ subsequent primes. Therefore no order of the primes is confirmed but its formation on the number line is used as it is, without to disclose its secret.

Hence the question for order or disorder of the primes is not answerable with the formula of Gandhi.



\begin{thebibliography}{9}

\bibitem{1}
\textbf{Gandhi, J.M.} Formulae for the \textit{n}th prime.
\textit{Proc. Washington State Univ. Conf. on Number Theorie},
96 - 106. Pullman, WA 1971

\bibitem{2}
\textbf{Golomb, S.W.}
A direct interpretation of Gandhi's formula.
\textit{Amer. Math. Monthly} 81 (1974), 752 - 754.

\bibitem{4}
\textbf{Vanden Eynden, C.}
A proof of Gandhi's formula for the \textit{n}th prime.
\textit{Amer. Math. Monthly} 79 (1972), S. 625.

\bibitem{3}
\textbf{Ribenboim, P.}
Die Welt der Primzahlen. \textit{Springer} 2011.

\end{thebibliography}
\end{document}